
%



\documentclass[SEDP]{cedram-sem}


\usepackage[latin1]{inputenc}

 \usepackage[T1]{fontenc}

 \usepackage{lmodern}

\usepackage{amsfonts,amsmath,amssymb,mathrsfs,mathbbol,bbm,mathtools}
\usepackage{amsthm}
\theoremstyle{plain}
\newtheorem{theorem}{Theorem}
\newtheorem{proposition}[theorem]{Proposition}

\theoremstyle{definition}

\newtheorem{definition}[theorem]{Definition}

\newcommand{\deriv}[1]{\frac{\mathrm{d}}{\mathrm{d}#1}}
\newcommand{\derivk}[2]{\frac{\mathrm{d}^{#2}}{\mathrm{d}#1}}
\newcommand{\initState}				{\state^0}

\newcommand{\state}					{y}
\newcommand{\criter}				{\mathscr{J}}
\DeclareMathOperator*{\argmin}		{argmin}
\newcommand{\abs}[1]				{\left| #1 \right|}
\newcommand{\dst}{\displaystyle}

\newcommand{\f}{\frac}
\newcommand{\ep}{\varepsilon}
\newcommand{\vareps}{\varepsilon}
\newcommand{\1}{{\mathchoice {\rm 1\mskip-4mu l} {\rm 1\mskip-4mu l}{\rm 1\mskip-4.5mu l} {\rm 1\mskip-5mu l}}}
\def\supp{{\mbox{\small\rm  supp}}\,}
\def\M{\mathcal{M}}

          \newcommand{\solFinf}				{u_{\infty}}

\newcommand{\R}						{\mathbbm{R}}
          \newcommand{\sol}					{u}
\newcommand{\solSinf}				{u^{\varepsilon}_{\infty}}
\newcommand{\norm}[1]				{\left\|#1\right\|}  
\newcommand{\Hone}[1][0,L]{\mathrm{H}^1(#1)}
\newcommand{\HoneR}[1][0,L]{\mathrm{H}^1_R(#1)}
\newcommand{\Ltwo}[1][0,L]{\mathrm{L}^2(#1)}
\newcommand{\Czero}[1][0,T]{\mathrm{C}^0(#1)}

\newcommand{\uin}{\sol^0}
\newcommand{\Htwo}[1][0,L]{\mathrm{H}^2(#1)}
\newcommand{\Hs}[1][0,L]{\mathrm{H}^s(#1)}
\newcommand{\Cone}[1][0,T]{\mathrm{C}^1(#1)}
\newcommand{\N}						{\mathbbm{N}}
\newcommand{\Cst}					{C^{\text{st}}}
\newcommand{\p}{\partial}
          \newcommand{\OpDir}				    {\Psi}
\newcommand{\OpD}				    {\OpDir}
\newcommand{\solS}					{u}
\newcommand{\solF}					{u}
\newcommand{\diff}					{\mathrm{d}}     
\newcommand{\observ}				{y}

\title[Inverse problems for depolymerisation and fragmentation]
{Moments approaches for asymptotic inverse problems of depolymerisation and fragmentation systems
}

\alttitle{In collaboration with Miguel Escobedo, Philippe Moireau and Magali Tournus}

\author
{\firstname{Marie} \lastname{Doumic}}

\address{MERGE, CMAP, Inria, \\ IP Paris, Ecole polytechnique, CNRS, \\
91128 Palaiseau cedex\\
FRANCE}
\thanks{This work has been partially supported by  the project ODISSE ANR-19-CE48-0004}

\email{marie.doumic@inria.fr}
%

\keywords{Depolymerisation system, observability inequality, Carleman inequalities,
error estimates, Tikhonov regularisation, Moments methods, Measure-valued solutions, Fragmentation equation}
  

\subjclass{35R30,35A35,35A23,35C10,35D30,35R0935Q92,92D25}

\begin{document}


\begin{abstract}  Shrinkage of large particles, either through depolymerisation (i.e. progressive shortening) or through fragmentation (breakage into smaller pieces) may be modelled by discrete equations, of Becker-D\"oring type, or by continuous ones. In this note, we review two kinds of inverse problems: the first is the estimation of the initial size-distribution from moments measurements in a depolymerising system, in collaboration with Philippe Moireau and inspired by experiments carried out by Human Rezaei's team; the second is the inference of fragmentation characteristics from size distribution samples, in collaboration with Miguel Escobedo and Magali Tournus, based on biological questions and experiments of Wei-Feng Xue's team.
 \end{abstract}

\maketitle 
 
\section{Introduction}
Polymers are large macromolecules formed out of small molecular units, called monomers. They are ubiquitous in nature and industry - e.g. plastics, biopolymers such as DNA, actin filaments or protein fibrils. Their shortening,  either through depolymerisation (i.e., loss of monomers) or through fragmentation 
into smaller pieces are dynamical phenomena which appear in many applications. More specifically,
the departure point of our research has been protein fibrils depolymerisation and breakage, thought to be key
mechanisms for many diseases (Parkinson's, Alzheimer's, Creuzfeldt-Jakob's etc.) as well as for many functional biomolecular
systems (actin filaments). The dynamic nature of the experiments, as well as their nanoscale, makes it
very challenging to estimate their features, i.e. their reaction rates or size distributions, leading to an urgent need for mathematical models and estimation methods to be developed. 
In this note, we consider two case studies of such inverse problems, directly inspired by modern experimental setups. 

\

The first problem is based on experiments  carried out in H. Rezaei's lab (INRAE, Jouy-en-Josas, France) where the observation consists in the time evolution of a moment of the size distribution  of protein polymers called PrP, responsible for Prion diseases. The moment observed may be the total polymerised mass (first moment) or the average molecular weight (second moment). The original model is a discrete depolymerisation system, based on the constant coefficient case of the Becker-D\"oring equations, namely the reactions

\begin{equation}\label{eq:depol}
{\mathcal C}_i \longrightarrow {\mathcal C}_{i-1} + {\mathcal C}_1.\end{equation}

\

 Our aim is to estimate the initial polymer size distribution from such moment observations. 
We first evaluate the impact of using continuous approximations of the initial discrete model to solve this
inverse problem. At first order, the model is approximated by a backward transport equation, for which
the inverse problem turns to be mildly ill-posed (of order $k+1$ when used to invert the time evolution
of the $k$th-moment of the solution). This remains true when polymerisation is also considered~\cite{DellaValleDoumicMoireau2024}, as in the
full Becker-D\"oring system, though the inversion reveals more intricate due to the fact that the problem
becomes nonlinear.

At second order, the asymptotic model becomes an advection-diffusion equation, where the diffusion
is a corrective term, complemented with an original transparent boundary condition at $x = 0$. This
approximation is more accurate, but we face an accuracy versus stability trade-off: the
inverse reconstruction reveals to be severely ill-posed. Thanks to Carleman inequalities and to log-convexity
estimates, we prove observability results and error estimates for a Tikhonov regularisation.
We then develop a Kalman-based observer approach, which reveals very efficient for the numerical
solution. This is a joint work with Philippe Moireau (Inria)~\cite{DoumicMoireau2024}, inspired by depolymerisation experiments carried out
by Human Rezaei and collaborators (Inrae). We sketch the main results of this study in Section~\ref{sec:depol}.

\

The second problem is based on fragmentation experiments carried out on several protein polymers  by W.-F. Xue's team (Univ. of Kent, Canterbury, United Kingdom), namely reactions of the kind

\begin{equation}\label{eq:frag}
{\mathcal C}_i \longrightarrow {\mathcal C}_{i-j} + {\mathcal C}_j.\end{equation} 

\

The biophysical question which interested our collaborators was to estimate the (size-dependent) fragmentation {\it rate}, as well as the so-called fragmentation {\it kernel}, which characterise the stability of the polymers and the places where they are more likely to break. We have proposed several approaches based on the continuous fragmentation equation, studying and making use either of the long-term, the transient or the short-term dynamics. Error estimates in Bounded Lipshitz norm are obtained for this last approach. This is a joint work with Miguel Escobedo and Magali Tournus~\cite{doumic2021inverse}, and the project is a long-standing collaboration with Wei-Feng Xue and collaborators~\cite{doumic:hal-01501811,tournus2021insights,BTMPSTDX19}, that we develop in Section~\ref{sec:frag}.


%
%

\section{Asymptotic inverse problems for depolymerisation systems}
\label{sec:depol}

\subsection{Original inverse problem: a discrete system}

At the basis of this research program lies experimental protocols which follow the time dynamics of average quantities over a size distribution of polymers. Typical measurements consist either in the total polymerised mass (e.g. through a Thioflavine T or Th.T protocol~\cite{prigent:hal-00778052}) or the average molecular weight (e.g. with Static Light Scattering~\cite{armiento:hal-01574346}). In a discrete setting, denoting $C_i(\tau)$ the concentration of polymers containing $i$ monomeric units, such measurement may be modelled by
\begin{equation}\label{def:moments}M_k (\tau)\coloneqq \sum\limits_{i=i_0}^\infty i^k C_i(\tau),
\end{equation}
with $k=1$ for the polymerised mass, $k=2$ for the average weight, $i_0\geq 1$ either a detection level or the smallest stable polymer, and the measurement of $M_k$ is made up to a noise. The first questions asked by our biologist collaborators were: what can we identify from such measurements, and what cannot we? We have addressed this question in different experimental settings~\cite{armiento:hal-01574346,Kruse2}, and then decided to focus on the question of initial-state observability and estimation, 
both because it is  interesting in itself and because it is a prerequisite to parameter estimation: even in cases where a priori knowledge of the initial state is known, it is always partial and noisy~\cite{armiento:hal-01574346}, requiring stability analysis.

We assume a constant depolymerisation rate $b>0,$  so that the mass balance equation of~\eqref{eq:depol} leads to the elementary system
\begin{equation}\deriv{\tau} C_i=b(C_{i+1}-C_i),\qquad C_i(0)=C_i^0.
\end{equation}
\subsection{Asymptotic approximations by continuous equations}
A fundamental characteristics of the systems we are interested in is that the average polymer size is very large - from some hundreds to some thousands of hundreds of monomers. For this reason, we rescale the system by accelerating the time, which is an alternative of size rescaling as done in~\cite{Thierry} for instance. We define, for a given $\ep \ll 1,$ $t\coloneqq \ep \tau \in [0,T],$ $c_i^\ep (t) \coloneqq  C_i(\f{t}{\ep}),$ and obtain the rescaled system
\begin{equation}\label{eq:discrete}
\deriv{t}c_i^\ep=\f{b}{\ep}(c_{i+1}^\ep-c_i^\ep),\qquad c_i^\ep(0)=c_i^0,\qquad i\geq i_0.
\end{equation}
We recognise a first-order finite difference scheme for the transport equation, which led us to define a stepwise interpolant and  grid:
\begin{equation}\label{def:grid}\forall\;i\geq i_0,\quad x_i^\ep\coloneqq \ep (i-i_0),\qquad u^\ep(t,x)\coloneqq c_i^\ep,\qquad t\geq 0,\; x\in [x_i^\ep,x_{i+1}^\ep).\end{equation}
A Taylor expansion leads us to the following backward transport equation as a first order approximation:
\begin{equation}
\label{eq:first-order}
\left\{
\begin{aligned}
	\partial_t \solFinf - b \partial_x \solFinf&=0,&  (t,x) \in  [0,T]\times \R^+ ,\\
	\solFinf(0,x) &= \sol^0(x), & x \in \R^+,
\end{aligned}\right.
\end{equation}
whereas the second order approximation is given by
\begin{equation}\label{eq:second-order}
\left\{
\begin{aligned}
        \partial_t \solSinf-b \partial_x\solSinf- \f{b\ep}{2} \partial_{x}^2 \solSinf &= 0, & (t,x)\in  [0,T]\times\R^+, \\
        \partial_t\solSinf(0,t) - b \partial_x \solSinf(0,t) &=0, & t\in[0,T],  \\
        \solSinf(0,x) &= \sol^\vareps(0,x),&  x\in \R^+.
\end{aligned}
\right.
\end{equation}
The transport-diffusion equation comes from the Taylor expansion, whereas the transport boundary condition at $x=0$ may be obtained by two considerations:
\begin{itemize}
\item as a boundary layer, we keep only the first order term, see e.g.~\cite{halpern1986artificial},
\item we require that the balance for the total number of polymers is preserved. This is given in the discrete setting by
\[\deriv{t}{M_0^\ep}=\deriv{t}\sum\limits_{i=i_0}^\infty c_i^\ep =-bc_{i_0}^\ep (t), \]
and doing the asymptotic expansion on this equality, where we identify $c_{i_0}^\ep$ with $u_\infty^\ep(t,0),$ leads to the transport boundary condition.
\end{itemize}
Well-posedness for~\eqref{eq:second-order} follows for instance from the Lumer-Phillips theorem applied to a well-chosen maximal accretive operator. To state error estimates, we introduce the following discrete norm
\[
 \norm{u}_{2,\vareps}^2 \coloneqq \sum_{i \geq i_0} \vareps  |u(x_i^\vareps)|^2.
\]
This norm is well defined for functions in $\Hone[\R_+^*]$ and is consistent with the $\Ltwo[\R_+]$-norm as $\varepsilon$ tends to 0. Moreover, it is well defined and equal to the $\Ltwo[\R^+]-$norm for piecewise constant functions defined on the grid $(x_i^\ep),$ as $u^\ep(t,\cdot)$ defined by~\eqref{def:grid} is. 

We prove the following error estimates for the two approximate systems.

\begin{proposition}[Prop.~2.1. and~2.2. of~\cite{DoumicMoireau2024}]\label{prop:estim}
Let  $\uin \in \mathrm{H}^2(\R^+)$ and  $\solFinf\in  \Czero[(0,T);\mathrm{H}^2(\R_+^*)] \cap \Cone[(0,T);\Ltwo[\R_+]]$ solution to~\eqref{eq:first-order}, $\solSinf\in 	 \Czero[(0,T);\Htwo[\R^+]] \cap \Cone[(0,T);\Ltwo[\R^+]]$ solution to~\eqref{eq:second-order} with $\uin$ as an initial condition. Let $(c_i^{0,\ep})\in \ell_2 (\N)$, $(c_i^\ep)$ solution to~\eqref{eq:discrete} and $\sol^\ep$ defined by~\eqref{def:grid}  such that 
\[\|\sol^\vareps(0,\cdot)- \sol^0 \|_{2,\vareps} \leq \varepsilon^\alpha,\] 
for $\alpha>0.$ There exists a constant $\Cst>0$ depending only on $b$ such that for all $t > 0$,   we have
 \[
	 \norm{ \sol^\vareps(t,\cdot)- \solFinf(t,\cdot)}_{2,\vareps} \leq  {\ep^\alpha+ \Cst \Vert\sol^0\Vert_{\Htwo}\,\ep\, t},
	 \]
	 and
	 \[
	 \norm{ \sol^\vareps(t,\cdot) - \solSinf(t,\cdot)}_{2,\vareps} \leq \ep^\alpha + \Cst \Vert\sol^0\Vert_{\Htwo} \,\vareps^{\f{3}{2}}\,t^{\f{1}{2}} .
	 \]
\end{proposition}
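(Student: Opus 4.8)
The plan is to run the classical ``consistency $+$ stability $\Rightarrow$ error'' argument for the semi-discretisation~\eqref{eq:discrete}, viewed as an exact-in-time method of lines, treating the two bounds in parallel. Writing $\bar u$ for either $\solFinf$ or $\solSinf$, I would set $\bar c_i(t)\coloneqq\bar u(t,x_i^\ep)$ (the continuous solution sampled on the grid~\eqref{def:grid}) and $e_i\coloneqq c_i^\ep-\bar c_i$. Since $\sol^\ep(t,\cdot)$ is piecewise constant on the grid with nodal values $c_i^\ep(t)$, one has $\norm{\sol^\ep(t,\cdot)-\bar u(t,\cdot)}_{2,\ep}^2=\sum_{i\ge i_0}\ep\,\abs{e_i(t)}^2$, and $\norm{e(0)}_{2,\ep}=\norm{\sol^\ep(0,\cdot)-\sol^0}_{2,\ep}\le\ep^\alpha$ by hypothesis; moreover $(\bar c_i(t))\in\ell_2$ because $\bar u(t,\cdot)\in\Hone[\R_+^*]$ and $\norm{\cdot}_{2,\ep}$ is dominated by the $\Hone$-norm. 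Subtracting the sampled equation from~\eqref{eq:discrete} gives the error equation $\deriv{t}e_i=\f b\ep(e_{i+1}-e_i)-\tau_i$, with truncation error $\tau_i\coloneqq\deriv{t}\bar c_i-\f b\ep(\bar c_{i+1}-\bar c_i)$.

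For the stability step I would pair the error equation with $e$ in the $\norm{\cdot}_{2,\ep}$ inner product and sum by parts: the telescoping identity $\sum_{i\ge i_0}(e_{i+1}^2-e_i^2)=-e_{i_0}^2$ together with $2e_ie_{i+1}\le e_i^2+e_{i+1}^2$ yields $b\sum_{i\ge i_0}e_i(e_{i+1}-e_i)=-\tfrac b2\sum_{i\ge i_0}(e_{i+1}-e_i)^2-\tfrac b2 e_{i_0}^2\le0$, i.e. the discrete transport operator is dissipative for $\norm{\cdot}_{2,\ep}$. Hence $\tfrac12\deriv{t}\norm{e}_{2,\ep}^2\le\norm{e}_{2,\ep}\norm{\tau}_{2,\ep}$ by Cauchy--Schwarz, and, dividing by $\norm{e}_{2,\ep}$ and integrating,
\[
\norm{e(t)}_{2,\ep}\le\ep^\alpha+\int_0^t\norm{\tau(s)}_{2,\ep}\,\diff s,
\]
so everything reduces to estimating the time integral of $\norm{\tau}_{2,\ep}$ for each model.

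For~\eqref{eq:first-order} I would use $\p_t\solFinf=b\p_x\solFinf$ and Taylor's formula with integral remainder, $\bar c_{i+1}-\bar c_i-\ep\,\p_x\solFinf(t,x_i^\ep)=\int_{x_i^\ep}^{x_{i+1}^\ep}(x_{i+1}^\ep-s)\,\p_x^2\solFinf(t,s)\,\diff s$, so that $\tau_i=-\f b\ep\int_{x_i^\ep}^{x_{i+1}^\ep}(x_{i+1}^\ep-s)\,\p_x^2\solFinf(t,s)\,\diff s$; Cauchy--Schwarz on each cell (where $\int_{x_i^\ep}^{x_{i+1}^\ep}(x_{i+1}^\ep-s)^2\diff s=\ep^3/3$) and summation give $\norm{\tau(t)}_{2,\ep}\le\tfrac{b}{\sqrt3}\,\ep\,\norm{\p_x^2\solFinf(t,\cdot)}_{\Ltwo[\R^+]}$, and since $\solFinf(t,x)=\uin(x+bt)$ one has $\norm{\p_x^2\solFinf(t,\cdot)}_{\Ltwo[\R^+]}\le\norm{\uin}_{\Htwo}$, whence the first bound after integrating in $t$. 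For~\eqref{eq:second-order} the point is that the advection-diffusion equation reproduces the \emph{next} term of the expansion of $\f b\ep(\bar c_{i+1}-\bar c_i)$: writing $\tfrac{b\ep}2\p_x^2\solSinf(t,x_i^\ep)=\f b\ep\int_{x_i^\ep}^{x_{i+1}^\ep}(x_{i+1}^\ep-s)\,\p_x^2\solSinf(t,x_i^\ep)\,\diff s$, the leading $\p_x^2$-contribution cancels and $\tau_i=\f b\ep\int_{x_i^\ep}^{x_{i+1}^\ep}(x_{i+1}^\ep-s)\bigl(\p_x^2\solSinf(t,x_i^\ep)-\p_x^2\solSinf(t,s)\bigr)\diff s$; bounding the bracket by $\int_{x_i^\ep}^{x_{i+1}^\ep}\abs{\p_x^3\solSinf(t,r)}\diff r$ and summing as above gives $\norm{\tau(t)}_{2,\ep}\le\tfrac b2\,\ep^2\,\norm{\p_x^3\solSinf(t,\cdot)}_{\Ltwo[\R^+]}$. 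I would then close with the parabolic energy estimate for~\eqref{eq:second-order}: differentiating the equation twice in $x$, testing with $\p_x^2\solSinf$, and noting that subtracting the transparent boundary condition from the equation evaluated at $x=0$ forces $\p_x^2\solSinf(t,0)=0$, so that all boundary terms vanish, one gets $\tfrac{b\ep}2\int_0^t\norm{\p_x^3\solSinf(s,\cdot)}_{\Ltwo[\R^+]}^2\diff s\le\tfrac12\norm{\uin}_{\Htwo}^2$; a Cauchy--Schwarz in time then gives
\[
\int_0^t\norm{\tau(s)}_{2,\ep}\,\diff s\le\f{b\,\ep^2}{2}\,t^{1/2}\Bigl(\int_0^t\norm{\p_x^3\solSinf(s,\cdot)}_{\Ltwo[\R^+]}^2\,\diff s\Bigr)^{1/2}\le\f{\sqrt b}{2}\,\ep^{3/2}\,t^{1/2}\,\norm{\uin}_{\Htwo},
\]
which is the second bound; one takes $\Cst$ equal to the larger of the two constants $b/\sqrt3$ and $\sqrt b/2$.

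The main obstacle, I expect, is the boundary at $x=0$. In the stability step the telescoping must be arranged so the boundary contribution has the favourable sign (it is $-\tfrac b2 e_{i_0}^2\le0$), and the whole second-order estimate rests on the non-standard transport boundary condition: it is precisely what makes $\p_x^2\solSinf(t,0)=0$ and so lets the energy estimate for $\p_x^3\solSinf$ close without boundary losses --- without it one power of $\ep$ would be lost and the rate would degrade. A minor technical wrinkle is that $\p_x^3\solSinf(s,\cdot)\in\Ltwo[\R^+]$ is only guaranteed for $s>0$, by parabolic smoothing; this is harmless since the integral above converges, and in any case one may first establish the estimate for smooth $\uin$ and pass to the limit, all constants depending only on $b$ and $\norm{\uin}_{\Htwo}$.
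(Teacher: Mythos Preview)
Your argument is correct and follows the same route sketched in the paper: Taylor expansion for the truncation error, Cauchy--Schwarz on each cell, discrete dissipativity plus Gronwall for the stability step, and, for the second-order model, the energy estimate on $\p_x^2\solSinf$ --- precisely the observation that the transparent boundary condition forces $\p_x^2\solSinf(t,0)=0$, so that $\p_x^2\solSinf$ solves the same advection-diffusion equation with homogeneous Dirichlet data and the $L^2_tL^2_x$ control on $\p_x^3\solSinf$ follows. Your explicit constants and the handling of the boundary contributions are fine.
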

The proof is obtained through the Taylor expansion, using the Cauchy-Schwarz inequality and a Gronwall lemma. For the second order approximation, we also use an energy estimate for $\p^2_{xx} \solSinf,$ which satisfies the same transport-diffusion equation as $\solSinf$ complemented with an homogeneous Dirichlet boundary condition at $x=0.$

\subsection{Original and asymptotic inverse problems}
The two approximations~\eqref{eq:first-order} and~\eqref{eq:second-order} may then be used to solve the original discrete inverse problem, which states: From a noisy measurement of~\eqref{def:moments} for $\tau \in [0,\mathcal{T}]$, how to estimate the initial distribution~$C_i(0)$? We first apply the rescaling by $t=\ep \tau$ and define
\begin{equation}\label{def:moments:eps}
M_k^\ep (\tau)\coloneqq \ep \sum\limits_{i=i_0}^\infty (\ep i)^k c_i^\ep (t),
\end{equation}
so that we have the following moments dynamics
\begin{equation}\label{eq:moments}	\deriv{t} M_k^\ep = 
	\begin{cases}
		-b c_{i_0}^\ep , & \text{if } k=0, \\
		-b M_0^\ep -b\ep i_0 c_{i_0}^\ep , & \text{if } k=1, \\
		- b k  M_{k-1}^\ep+O(\ep^2), & \text{if } k\geq 2.		
	\end{cases}
\end{equation}
The term $O(\ep^{2})$ is formally obtained, but for a compactly supported initial condition, Prop.~\ref{prop:estim} implies that it is at most $O(\ep^{3/2}).$ 
We now restrict ourselves to such cases, namely $\ep i\leq L,$ or equivalently $x\in [0,L].$ 

The original inverse problem may be formulated as: Invert 
\[
\OpD_{T,\ep}^{\uin\to  M_k^\ep}: \left|
	\begin{aligned}
		\Ltwo[0,L] &\to \Ltwo[0,T] \\
		\uin &\mapsto M_k^\ep. 
	\end{aligned}\right.
	\]
The system~\eqref{eq:moments} links $M_k^\ep$ to $c_{i_0}^\ep(t)=\solS(t,0),$ so that writing $\OpD_{T,\ep}^{\uin\to  M_k^\ep} = \OpD_{T,\ep}^{\uin\to \text{Tr}} \circ \OpD_{T,\ep}^{{Tr}\to M_k^\ep}$ we decompose our problem  into two steps:
\begin{enumerate}
\item \label{step:1} Invert 
\[\OpD_{T,\ep}^{\uin\to \text{Tr}}: \left|
	\begin{aligned}
		\Ltwo[0,L] &\to \Ltwo[0,T] \\
		\uin &\mapsto \solS(\cdot,0) \in \R)
	\end{aligned}\right.
	\]
	\item \label{step:2} Invert
	\[\OpD_{T,\ep}^{{Tr}\to M_k^\ep}: \left|
	\begin{aligned}
		\Ltwo[0,T] &\to \Ltwo[0,T] \\
		\solS(\cdot,0) &\mapsto M_k^\ep. 	\end{aligned}\right.
	\]
\end{enumerate}
In the case of the first order inverse problem, the method of characteristics implies that $\solS(t,0)=\uin(bt)$ so that inverting $\OpD_{T,\ep}^{\uin\to \text{Tr}}$ is a well-posed problem if and only if $b T\geq L.$ Moreover, 
we may neglect the term  $ -b\ep i_0 c_{i_0}^\ep$ in~\eqref{eq:moments} and  obtain
\[\derivk{t^{k+1}}{k+1}{M_k^\ep} (t) = (-b)^{k+1} k! c_{i_0}^\ep (t) +O(\ep)= (-b)^{k+1} k! \uin (bt) +O(\ep),
\]
which shows that inverting $\OpD_{T,\ep}^{{Tr}\to M_k^\ep}$ is {\it moderately/mildly} ill-posed of degree  $k+1,$ and from which we immediately infer the following observability/stability inequality
\begin{equation}\label{eq:stability-order-0}
	\forall T \geq T_0 \coloneqq \frac{L}b, \quad \| \solF^0 \|^2_{\Ltwo} \lesssim \norm{\frac{\diff^{k{+1}}}{\diff t^{k{+1}}} {M}_k}_{\Ltwo[0,T]}^2,
\end{equation}
see~\cite{armiento:2016} for more details and a full solution of the inverse problem - theoretical, numerical with the implementation of a Kalman-type sequential approach, and also applied to experimental data. Let us only mention that for $\uin \in \Hs$, $s>0,$ an observation $\observ^\delta$ and a measurement error 
\[\Vert M_k^\ep - \observ^\delta \Vert_{\Ltwo} \leq \delta, \]
 we obtain an optimal error estimate in the order of $\ep^{\f{s}{k+s+1}}$ - as expected given the degree of ill-posedness $k+1.$

In the case of the second order problem, the step~\eqref{step:1} reveals {\it severely} ill-posed, as linked to the time-reversal of  the (infinitely smoothing) heat equation. On the contrary, the second step, given by inverting~\eqref{eq:moments}, is slightly less ill-posed, of degree $k$ instead of $k+1$ if $k\geq 1,$ thanks to the corrective term $ -b\ep i_0 c_{i_0}^\ep$.  In the following, we thus focus on Step~\eqref{step:1}: inverting $\uin \mapsto \solS(\cdot,0)$.

\subsection{Observability of the second-order inverse problem}
A necessary step before solving the inverse problem is to prove that it is observable - other said, that we have a unique solution in a certain space and continuity in a certain sense. To do so, we first restrict ourselves to a bounded domain $[0,L]$, and look for solutions to
\begin{equation}\label{eq:second-order-L}
\left\{
\begin{aligned}
        \partial_t \solS-b \partial_x\solS- \f{b\ep}{2}\partial_{x}^2 \solS &= 0, & (t,x)\in [0,T]\times [0,L], \\
        \partial_t\solS(t,0) - b \partial_x \solS(t,0) &=0, & t\in[0,T],  \\
        \solS(t,L) &=0, & t\in[0,T], \\
        \solS(0,x) &= \uin(x),&  x\in (0,L).
\end{aligned}
\right.
\end{equation}
The homogeneous Dirichlet boundary condition at $x=L$ comes naturally from the discrete problem, for which,  if initially, for $i\geq \ep^{-1}L,$ we have $c_i^\ep (0)=0,$  then $c_i^\ep (t)=0$ for any $t\geq 0.$ In this respect, the bounded problem is thus even more faithful than the unbounded one for compactly-supported initial data. The system is solved, as for the unbounded problem~\eqref{eq:second-order}, by introducing an appropriate accretive operator, and the following observability inequality is obtained. 
\begin{theorem}[Th.~4.5 in~\cite{DoumicMoireau2024}]\label{th:initobserv}
Let $\uin \in \HoneR\cap \Htwo$, with $\Vert \uin \Vert_{\Hone} \leq M$. 

Let $\sol \in \Czero[(0,T);\HoneR]\cap \Cone[(0,T);\Ltwo]$ the unique solution to~\eqref{eq:second-order-L}. 

Defining the increasing function $\rho: x\mapsto x e^x,$ we have the following observability inequality, for constants $c, C$ depending only on $b$ and $L:$
 \begin{equation}\label{estim:observ:final}
 \left\|\sol(0,\cdot) \right\|_{L^2(0,L)}^{2}+{\ep}\vert \sol (0,0)\vert^2  \lesssim \f{Ce^{\f{L}{\ep}}M^2}{
 \rho^{-1} \left(C T \f{e^{-\f{c}{\ep} (1+T^{-1})}}{1+T^{-4}} \f{ M^2}{\int_0^T \vert \sol\vert^2 (t,0) \diff t}\right)
 } \f{T}{\ep}.
 \end{equation}
\end{theorem}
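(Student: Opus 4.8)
The inverse problem at hand — recovering the Cauchy datum $\uin = \sol(0,\cdot)$ of the forward–parabolic equation~(\ref{eq:second-order-L}) from the lateral trace $\sol(\cdot,0)$ on $(0,T)$ — is a non‑characteristic Cauchy problem with data on the lateral boundary $\{x=0\}$, i.e. a \emph{sideways heat equation}, which is severely ill‑posed; one can therefore at best hope for a conditional stability estimate with a logarithmic modulus, and the function $\rho: x\mapsto xe^x$ (whose inverse grows like $\log$) is precisely of that type. The plan is to combine two ingredients: a global Carleman inequality for the operator $\partial_t - b\partial_x - \tfrac{b\ep}{2}\partial_x^2$, which quantifies the unique continuation from $\{x=0\}$, and a logarithmic convexity estimate in time, which accounts for the irreversibility of the flow and is where the a priori bound $M$ enters.

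First I would establish a global Carleman estimate on $(0,T)\times(0,L)$ with a weight of the form $\exp(s\,\theta(t)\,\eta(x))$, with $\eta$ strictly monotone in $x$ and without critical point on $[0,L]$ (e.g. $\eta(x)=e^{-\lambda x}-e^{-\lambda L}\ge 0$, so that $\eta(L)=0$ and $\partial_x\eta<0$) and $\theta$ a suitable, possibly time‑singular, factor. The monotonicity of $\eta$ together with the Dirichlet condition $\sol(t,L)=0$ makes the boundary contribution at $x=L$ have the favourable sign, so that only the boundary terms at $x=0$ survive; and the transparent condition $\partial_t\sol(t,0)=b\partial_x\sol(t,0)$ lets one rewrite the Neumann trace at $x=0$ through $\partial_t\sol(t,0)$, so that, after one integration by parts in time (legitimate since $\sol\in\Cone[(0,T);\Ltwo]$), \emph{all} surviving boundary terms are controlled by $\int_0^T|\sol|^2(t,0)\,\diff t$. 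The crucial and most delicate point is to carry the dependence on $\ep$ explicitly through the whole computation: since the diffusion coefficient $\tfrac{b\ep}{2}$ is small, the usual large‑parameter balance of the Carleman method must be rescaled (essentially $s\sim\ep^{-1}$), which is what produces the factors $e^{\pm c/\ep}$; the weight $\eta$ being of size $O(1)$ on $[0,L]$ yields the prefactor $e^{L/\ep}$, and the time‑singular part of $\theta$ yields the negative powers of $T$.

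From this Carleman inequality I would deduce, by the standard argument — restricting the space–time integral to a subinterval where $\theta$ is bounded below, then using the energy estimate for~(\ref{eq:second-order-L}), which propagates $L^2$ control forward in time because the governing operator is accretive in the norm $\|\cdot\|_{L^2(0,L)}^2+\tfrac{\ep}{2}|\cdot(0)|^2$ (so that $\|\sol(t,\cdot)\|_{\Hone}\le M$ uniformly from $\|\uin\|_{\Hone}\le M$) — an estimate bounding the intermediate‑time energy $\|\sol(\sigma,\cdot)\|_{L^2(0,L)}^2+\ep|\sol(\sigma,0)|^2$, at some $\sigma\in(0,T)$, by $e^{Cs}\int_0^T|\sol|^2(t,0)\,\diff t$ up to a tail term $e^{-cs/\ep}M^2$. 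Then a logarithmic convexity (Agmon–Nirenberg / frequency‑function type) estimate for the forward parabolic flow lets one bound the initial energy $\|\uin\|_{L^2(0,L)}^2+\ep|\uin(0)|^2$ by a power of the intermediate‑time energy against $M^2$; because the backward continuation is only logarithmically stable, this is the step that introduces the modulus $\rho^{-1}$.

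The last step is the optimization over the free parameter $s$: balancing the two competing terms $e^{-cs/\ep}M^2$ and $e^{Cs}\int_0^T|\sol|^2(t,0)\,\diff t$, one finds a minimizer characterized through the inverse of $x\mapsto xe^x$, and collecting all the powers of $\ep$ and $T$ gives exactly~(\ref{estim:observ:final}), with the prefactor $Ce^{L/\ep}M^2\,T/\ep$ and the $\rho^{-1}$ in the denominator. I expect the main obstacle to be entirely a matter of bookkeeping: getting the \emph{sharp} explicit dependence on $\ep$ (and on $T$), rather than an unquantified constant $C(\ep)$, forces one to redo the Carleman computation in the degenerate‑diffusion regime and to check, at each step, that the non‑standard transparent boundary term at $x=0$ never contributes anything that cannot be reabsorbed into the observed trace.
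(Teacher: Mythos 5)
Your strategy is essentially the paper's own: the proof there combines exactly your two ingredients, namely a Carleman inequality for the vanishing-viscosity transport--diffusion operator with observation of the trace at $x=0$ (an adaptation of Proposition~10 of~\cite{Cornilleau:2012dba}, in the spirit of~\cite{Coron:2005vd}, with the $\ep$-dependence tracked explicitly and the transparent condition at $x=0$ absorbed into the observed trace), together with a log-convexity estimate à la~\cite{Bardos:1973aa,Phung:2004aa} that bounds the initial datum by the solution at a later time under the a priori bound $\Vert \uin\Vert_{\Hone}\leq M$. The only imprecision is your attribution of $\rho^{-1}$ to the final optimisation in $s$: balancing the two exponentials $e^{Cs}\int_0^T|\sol|^2(t,0)\,\diff t$ and $e^{-cs/\ep}M^2$ would only yield a H\"older-type bound, whereas the logarithmic modulus $\rho^{-1}$ (together with the prefactor $e^{L/\ep}$ coming from conjugating away the drift to apply the spectral/log-convexity argument) is produced by the log-convexity step itself --- which you also, correctly, identify as the source of the logarithmic rate, so the overall architecture matches.
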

{\bf Sketch of the proof.} The inequality~\eqref{estim:observ:final} relies on two main ingredients: 
\begin{itemize}
\item a log-convexity inequality, inspired by~\cite{Bardos:1973aa,Phung:2004aa,vo:tel-02081052}, which gives a bound of the initial condition with respect to the final time solution. This allows us to bypass a more standard exponential stability estimate, which bounds the final time solution by the initial condition.
\item A Carleman inequality, inspired by~\cite{Cornilleau:2012dba,Coron:2005vd}, which proves a controllability inequality for a very similar system. This technical  part is an  adaptation of the proof of Proposition~10 of~\cite{Cornilleau:2012dba}. 
\end{itemize}
A first interest of the inequality~\eqref{estim:observ:final} is to provide uniqueness for the inversion of $\OpD_{T,\ep}^{\uin\to \text{Tr}}$ and stability of a reconstruction. Moreover,  thanks to the diffusion term, 
 we no longer have a lower-bound condition  on $T$ like the condition $T\geq T_0$ for~\eqref{eq:stability-order-0}. There is a price to pay however, which is the logarithmic rate of convergence given by $1/\rho^{-1}$. 
\subsection{Solution and error estimate for the second-order inverse problem}

A second benefit of~\eqref{estim:observ:final} is to indicate how a Tikhonov regularisation strategy may be built, and an estimate for the reconstruction. Since the bound $M$ of the $\Hone$ norm appears in~\eqref{estim:observ:final}, we define the following quadratic functional to minimise:
\begin{equation}\label{eq:least-squares functional}
	\criter_{|T} (\uin) := \frac{1}{2M^2} \Vert \uin\Vert_{H^1}^2
	+   \frac{1}{2\delta^2} \int_0^T  \abs{\observ^\delta(t) - \sol_{|\uin}(t,0)}^2  \,  \diff t ,		
	\end{equation}
 and we obtain the following error estimate.
\begin{theorem}[Th.~4.6 in~\cite{DoumicMoireau2024}]\label{th:tikhonov}
Let $\uin \in \HoneR$ with $\Vert \uin\Vert_{H^1}\leq  M,$ $\observ= \sol_{\uin}(t,\cdot)=\OpD_{T,\ep}^{\uin\to \text{Tr}}(\uin),$ $\delta>0$. We assume that we observe $\observ^\delta$ such that
\[\Vert \observ - \observ^\delta\Vert_{L^2(0,T)} \leq \delta.
\]
Let $\widehat{\uin}$ an estimate for $\uin$ defined by
\begin{equation}\label{def:uinestim}
\widehat{\uin}=\argmin_{{\uin}\in \HoneR}  \criter_{|T} (\uin).
\end{equation} 
Then there exist constants $C_1$ and $C_2,$ depending only on the parameters $L,$ $b$, $T$ and $\ep,$ such that
\begin{equation}
\Vert \uin - \bar{\sol}^0_{|T}\Vert_{L^2(0,L)}^2 \leq \f{C_1 M^2}{{\rho^{-1}(C_2 \f{M^2}{\delta^2})}}.
\end{equation}
\end{theorem}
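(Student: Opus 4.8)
The plan is to couple the conditional stability estimate of Theorem~\ref{th:initobserv}, applied to the difference $\sol_{|\uin}-\sol_{|\widehat{\uin}}$ of two solutions of~\eqref{eq:second-order-L}, with the minimality property defining $\widehat{\uin}$ — the classical pattern for turning a conditional (here logarithmic) stability estimate into a Tikhonov convergence rate.

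First I would record that the minimization~\eqref{def:uinestim} is well posed: $\uin\mapsto\sol_{|\uin}(\cdot,0)$ is linear and continuous from $\HoneR$ to $L^2(0,T)$, so the misfit term in~\eqref{eq:least-squares functional} is convex, while $\frac{1}{2M^2}\Vert\cdot\Vert_{H^1}^2$ is strictly convex, continuous and coercive; hence $\criter_{|T}$ has a unique minimizer $\widehat{\uin}\in\HoneR$. The crucial elementary remark is then that $\uin$ is itself admissible, so $\criter_{|T}(\widehat{\uin})\le\criter_{|T}(\uin)$, and with $\Vert\uin\Vert_{H^1}\le M$ and $\Vert\observ-\observ^\delta\Vert_{L^2(0,T)}\le\delta$ one gets
\[
\criter_{|T}(\uin)=\frac{1}{2M^2}\Vert\uin\Vert_{H^1}^2+\frac{1}{2\delta^2}\int_0^T|\observ^\delta-\observ|^2\,\diff t\le 1,
\]
so $\criter_{|T}(\widehat{\uin})\le 1$; reading off the two terms gives the a priori bounds $\Vert\widehat{\uin}\Vert_{H^1}^2\le 2M^2$ and $\int_0^T|\observ^\delta(t)-\sol_{|\widehat{\uin}}(t,0)|^2\,\diff t\le 2\delta^2$.

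Next I would set $w^0:=\uin-\widehat{\uin}$ and $w:=\sol_{|\uin}-\sol_{|\widehat{\uin}}$, which by linearity solves~\eqref{eq:second-order-L} with datum $w^0$ and satisfies $w(t,0)=\observ(t)-\sol_{|\widehat{\uin}}(t,0)$. Two triangle inequalities then give $\Vert w^0\Vert_{H^1}\le(1+\sqrt2)M=:M'$ and, writing $w(t,0)=(\observ-\observ^\delta)+(\observ^\delta-\sol_{|\widehat{\uin}}(\cdot,0))$,
\[
\int_0^T|w(t,0)|^2\,\diff t\le 2\Vert\observ-\observ^\delta\Vert_{L^2(0,T)}^2+2\int_0^T|\observ^\delta-\sol_{|\widehat{\uin}}(t,0)|^2\,\diff t\le 6\delta^2.
\]
Applying the observability inequality~\eqref{estim:observ:final} to $w$ with the bound $M'$, using that $\rho^{-1}$ is increasing (so replacing $\int_0^T|w|^2(t,0)$ by the larger $6\delta^2$ only shrinks the argument of $\rho^{-1}$) and dropping the nonnegative $\ep|w(0,0)|^2$ on the left, I would obtain
\[
\Vert\uin-\widehat{\uin}\Vert_{L^2(0,L)}^2\le\Vert w(0,\cdot)\Vert_{L^2(0,L)}^2+\ep|w(0,0)|^2\lesssim\frac{C\,e^{L/\ep}(M')^2\,T/\ep}{\rho^{-1}\!\left(CT\,\dfrac{e^{-\frac{c}{\ep}(1+T^{-1})}}{1+T^{-4}}\,\dfrac{(M')^2}{6\delta^2}\right)}.
\]
Since $(M')^2=(1+\sqrt2)^2M^2$ and $C,c$ depend only on $b,L$, absorbing every factor depending only on $L,b,T,\ep$ into $C_1$ (numerator) and into the coefficient $C_2$ of $M^2/\delta^2$ inside $\rho^{-1}$ yields exactly $\Vert\uin-\widehat{\uin}\Vert_{L^2(0,L)}^2\le C_1M^2/\rho^{-1}(C_2M^2/\delta^2)$.

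The one genuinely delicate point — the main obstacle — is a regularity mismatch: Theorem~\ref{th:initobserv} is stated for data in $\HoneR\cap\Htwo$, whereas a priori $\widehat{\uin}$, hence $w^0$, is only known to lie in $\HoneR$. Before the above is legitimate one must therefore extend~\eqref{estim:observ:final} to all of $\HoneR$; I would do this by density, checking that solutions of~\eqref{eq:second-order-L} with $H^1$ data still admit an $L^2(0,T)$ trace at $x=0$ and that both sides of~\eqref{estim:observ:final} depend continuously on the datum in the $H^1$ topology (alternatively, show by parabolic regularity that the minimizer is smoother). Everything else is bookkeeping of constants; one could equally replace the ``$\uin$ is a competitor'' step by the Euler--Lagrange equation for $\widehat{\uin}$.
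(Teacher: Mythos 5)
Your proposal is correct and follows essentially the same route as the paper: use $\uin$ as a competitor in the Tikhonov functional to deduce $\Vert\widehat{\uin}\Vert_{H^1}^2\leq 2M^2$ and a residual bound of order $\delta^2$, then apply the observability inequality of Theorem~\ref{th:initobserv} to the difference of the two solutions, whose boundary trace is controlled by $6\delta^2$. The only differences are cosmetic — by invoking the observability estimate directly with the a priori bound $(1+\sqrt2)M$ you sidestep the paper's closing monotonicity argument for $x\mapsto x/\rho^{-1}(x)$, and your caveat about the $\Htwo$ regularity formally required to apply Theorem~\ref{th:initobserv} to $\uin-\widehat{\uin}$ is a fair point that the paper's short proof also leaves implicit.
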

As expected, the speed of convergence is logarithmic with respect to the noise $\delta,$ to be compared with an algebraic rate of convergence for the first-order system. We give here the proof, which is very short and emblematic of error estimates for Tikhonov regularisation method.
\begin{proof}
Let us denote $\tilde{\sol}^0=\uin-\widehat{\uin}$ and  $\tilde\sol=\sol_{|\tilde{\sol}^0}$ the solution of~\eqref{eq:second-order-L} with $\tilde{\sol}^0$ as an initial condition. We apply~\eqref{estim:observ:final} to $\tilde\sol$ (and replace $M$ by $\Vert\tilde{\sol}^0\Vert_{H^1}$), and obtain
\[\left\|\tilde\sol(\cdot,0) \right\|_{L^2(0,L)}^{2}+{\ep}\vert \tilde\sol (0,0)\vert^2  \lesssim \f{Ce^{\frac{L}{\ep}}\Vert \tilde{\sol}^0\Vert_{H^1}^2}{
 \rho^{-1} \left(C T \f{e^{-\f{c}{\ep} (1+T^{-1})}}{1+T^{-4}} \f{ \Vert\tilde\sol^0\Vert_{H^1}^2}{\int_0^T \vert \tilde\sol\vert^2 (t,0) \diff t}\right)
 } \f{T}{\ep}.
\]
From $\criter_{|T} (\widehat{\uin}) \leq \criter_{|T} (\uin) \leq 1$ we have
\[\int_0^T \vert \tilde\sol\vert^2 (t,0) \diff t \leq 2 \int_0^T \vert u_{|\widehat\uin} - \observ^\delta \vert^2 (t,0) \diff t + 2 \int_0^T \vert \observ^\delta -\observ \vert^2 (t,0) \diff t \leq 6 \delta^2.
\] 
By the triangular inequality, we obtain
\[\Vert\tilde{\sol}^0\Vert_{H^1}^2\leq 2 \Vert\widehat{\uin}_{|T}\Vert_{H^1}^2+2\Vert\uin\Vert_{H^1}^2\leq 2 M^2+ 2\Vert\widehat{\uin}_{|T}\Vert_{H^1}^2.
\]
To estimate $\Vert \widehat{\uin}_{|T}\Vert_{H^1}^2,$ we use the fact that it minimises $\criter_{|T}:$ by definition, we have
\begin{multline*}
\Vert\widehat{\uin}\Vert_{H^1}^2 
\leq {2M^2}\criter_{|T}(\bar\initState)\leq {2M^2}\criter_{|T}(\uin)
\\
\leq {{2M^2}\left(\f{1}{2M^2} \Vert \uin\Vert^2_{H^1}+\f{1}{2}\right)\criter_{|T}(\bar\initState)}
\leq 2 M^2 ,
\end{multline*}
hence
\[\Vert\tilde{\sol}^0_{|T}\Vert_{H^1}^2\leq 4 M^2
\]
and we conclude by the fact that $x\mapsto \f{x}{\rho^{-1}(x)}$ is increasing on $(0,\infty)$: we compute that $\rho'(x)=e^x(1+x)$, hence taking $x=\rho(y)$ we get
\[ \f{\diff}{\diff x}(\f{\cdot}{\rho^{-1}(\cdot)})\left(\rho(y)\right)=\f{1}{y}-\f{\rho(y)}{y^2 \rho'(y)}=\f{1}{y} - \f{1}{y(1+y)}=\f{1}{1+y}>0
\] for $y>0$.
\end{proof}
To conclude, we proposed a sequential approach  based on Kalman filtering to solve the estimation problem. Despite the severely ill-posed character of the second-order problem, it reveals more accurate in simulation tests than the pure transport approximation. Moreover, the information contained in the corrective diffusion is visible in the fact that even for $T\leq T_0=\f{L}{b}$ we are able to estimate the distribution for $x>bT.$ How to refine the Carleman estimate cited above, in order to quantify precisely how this information content vanishes for small times and, on the contrary, stop improving for $T_0\lesssim T,$ remains a very interesting open problem.

\section{Inverse problems for the fragmentation equation}\label{sec:frag}
Let us now turn to reactions of fragmentation type as given by~\eqref{eq:frag}. Writing a first-order asymptotics in the same spirit as~\eqref{eq:first-order} leads to the fragmentation equation:
\begin{equation}\label{eq:frag_intro}
\dfrac{\p}{\p t}u(t,x) =- \alpha x^\gamma u(t,x) +  \alpha \dst\int_{x}^{\infty}\kappa\left(\dfrac{x}{y} \right)y^{\gamma-1} u(t,y){\diff y},
 \end{equation}
where $\alpha x^\gamma$ represents the breakage rate of a particle of size $x$, that we have assumed to be given by a power law, and $\kappa(z)$ is linked to the probability for a particle of size $y$ to give rise to a particle of size $x=zy.$ This model appeared as the best-fit one in experiments on $\beta_2-$microglobulin~\cite{XueRadford2013}, where the experimental measurements consisted in samples of fibril sizes observed at several time points. This led us to formulating the following inverse problem: How to estimate the fragmentation features given by the parameters $(\alpha,\gamma,\kappa)$ from such data,   with $\alpha,\gamma >0$ and $\kappa$ a probability measure on $[0,1]$?

For mass conservation considerations we assume here binary fragmentation and no atom at $z=0$ and $z=1,$  hence
\begin{equation}\label{as:kappa}
\kappa \in \mathcal{M^+}((0,1)),  \qquad 
\int_0^1 \kappa(\diff z)=2,\qquad \int_0^1 z \kappa(\diff z)= 1,\qquad \kappa(z)=\kappa(1-z).
\end{equation}
For simplicity we have assumed binary fragmentation, which corresponds to $N=2$ in~\cite{doumic2021inverse}, but the study is unchanged. 

This section explains the various stages in our approach, and how we successively used asymptotic behaviour, then large-time dynamics, and finally short-time behaviour, to try to estimate $\kappa,\,\alpha$ and $\gamma$. The common thread running through our studies is the use of moments of the solution to interpret the data observed, either integer moments or through the Mellin transform.

\subsection{Using the asymptotic profile}
First inspired by our knowledge on long-time asymptotics for the fragmentation equation~\cite{EscoMischler3} and for the growth-fragmentation equation~\cite{MMP2} - note that this field of research has been continuously active for several decades, with remarkable new results inspired by stochastic methods~\cite{canizo2021spectral} or optimal transport~\cite{fournier2021nonexpanding} - and by previous studies of the inverse problem based on the asymptotic steady size distribution~\cite{PZ,DPZ,DHRR}, we first studied the information content of the self-similar asymptotic profile $g$ solution to
\begin{equation}\label{steady:frag}2g(z )+  z g' (z) + \alpha\gamma z^\gamma g(z)=\alpha \gamma \int\limits_z ^\infty \kappa(\f{z}{u}) u^{\gamma-1} g(u ) \diff u,\qquad \int g(z)\diff z=1.
\end{equation}
This idea was guided by the fact that, as proven in Theorem~3.2 of~\cite{EscoMischler3} and Theorem~3.2. of~\cite{MMP2},  under suitable assumptions on $\kappa$ and for $\gamma>0,$ we have
\begin{equation}\label{lim:frag}
\lim _{ t\to \infty }\int \limits _0^\infty\left|u(t, y)-t^{\frac {2} {\gamma }}g\left( t^{\frac {1} {\gamma }} y \right)\right| y\diff y=0,
\end{equation}
so that we can guess that, after rescaling, the profile $g$ may be experimentally observed. The inverse problem can then be reduced to: From observations on $g,$ is it possible to estimate $(\alpha,\gamma,\kappa)$? 

In~\cite{doumic:hal-01501811}, we used the Mellin transform, defined for measure-valued functions by
 \begin{equation}\label{def:Mellin}
 {M}[g](s) \coloneqq \dst\int_0^{+\infty} x^{s-1} g(x)\diff x,   \qquad M[\kappa](s)\coloneqq \dst \int_0^1 x^{s-1} \kappa(\diff x),
 \end{equation}
  and the multiplicative convolution (which plays a similar role for the Mellin transform as the standard convolution product for the Fourier transform)
  \begin{equation}\label{def:conv}
(f\ast g) (x) \coloneqq\dst\int_{\R^+} f(y)\; g\left( \dfrac{x}{y}\right)\dfrac{dy}{y}, \qquad M[f\ast g](s)= M[f](s).M[g](s).
 \end{equation}
We moreover notice that ${M} [z\mapsto z^\gamma g(z)](s)= { M}[g](s+\gamma),$ hence taking the Mellin transform of~\eqref{steady:frag} leads to an explicit formula for $\kappa,$ namely
\begin{equation}\label{eq:Melling}
M[\kappa](s) =1+\f{(2-s)M[g](s)}{\alpha\gamma M[g](s+\gamma)}.
\end{equation}
To justify  rigorously~\eqref{eq:Melling}, we need to prove that $M[g](s+\gamma)$ never vanishes on a vertical strip; we then also need to define its inverse Mellin transform. To do so, we carried out  a detailed analytical study in the complex plane. We also proved a uniqueness result which states that for a wide class of kernels $\kappa,$ for a given $g$, there exists at most one triplet $(\alpha,\gamma,\kappa)$ such that $g$ is solution to~\eqref{steady:frag} (Theorem~1 and~2 in~\cite{doumic:hal-01501811}). These results however revealed of little practical use, for two main reasons.
\begin{itemize}
\item Though $(\alpha,\gamma)$ are uniquely determined, they are obtained by using the asymptotic behaviour of $g(z)$ for $z\to\infty.$ Since we have only access to size samples, we have information on $g$ only on a compact set of $(0,+\infty).$
\item The formula involves Mellin and inverse Mellin transform, so that the inverse problem, as for the second-order asymptotic inverse problem seen above (Theorem~\ref{th:tikhonov}), is severely ill-posed. When numerical implemented, the results failed to be satisfactory.
\end{itemize}
\subsection{Using the long-term dynamics}
We thus turned to the time-evolution dynamics given by~\eqref{lim:frag}. From a sample $(x_1(t_i),\cdots,x_{n_i}(t_i))$ at time $t_i$, we estimate the average polymer size $M_1(t_i)=\f{\int_0^\infty x u(t,x)\diff x}{\int_0^\infty u(t,x)\diff x}$ by the empirical first moment, i.e.
\begin{equation}\label{def:mui}\widehat M_1 (t_i)=\sum\limits_{k=1}^{n_i} x_k (t_i)
\end{equation} and we can relate it to $u(t,x)$  to estimate $(\alpha,\gamma)$. The limit given by~\eqref{lim:frag} implies
\[M_1 (t)\approx_{t\gg 1}  \f{\dst\int t^{\f{2}{\gamma}} y g(t^{\f{1}{\gamma}}y)\diff y}{\dst\int t^{\f{2}{\gamma}}  g(t^{\f{1}{\gamma}})\diff y}=C t^{-\f{1}{\gamma}},
\]
Similarly, for the estimation of $\alpha,$ we notice that integrating~\eqref{steady:frag} and using the mass conservation property $\int \kappa(\diff z)=1$ provides the equality
\begin{equation}\label{eq:alpha}
1=\alpha\gamma \int z^\gamma g(z)\diff z.
\end{equation}
We thus use the $\gamma-$th moment $M_\gamma (t)=\f{\int_0^\infty x^\gamma u(t,x)\diff x}{\int_0^\infty u(t,x)\diff x}$, approximated  by an empirical $\widehat{M_{\hat\gamma}} (t_i)$ defined by
\begin{equation}\label{def:mugammai}
\widehat M_{\hat\gamma} (t_i)=\sum\limits_{k=1}^{n_i} \left(x_k(t_i)\right)^{\hat\gamma} 
\end{equation}
 and the equality
\begin{equation}\label{eq:mugamma}
M_\gamma (t) \approx_{t\gg 1} \f{\int t^{\f{2}{\gamma}} y^\gamma g(t^{\f{1}{\gamma}}y)\diff y}{\int t^{\f{2}{\gamma}} g(t^{\f{1}{\gamma}})\diff y} = \f{\int z^\gamma g(z)\diff z}{t} \implies \alpha \approx_{t\gg 1} \f{1}{\gamma t} \f{1}{M_\gamma (t)}
\end{equation}
so that in~\cite{BTMPSTDX19} we designed the following estimation protocol for $(\alpha,\gamma)$:
\begin{enumerate}
\item \label{step1} On observed data $(\log(t_i),\log(\widehat M_1 (t_i))_{1\leq i\leq n}$ defined by~\eqref{def:mui}, fit  the three parameters $(C,t_{asymp},\hat\gamma)$ of a curve 
\[\left(\log (t), C  -\f{1}{\hat \gamma} \log (\f{t}{t_{asymp}})\1_{t\geq t_{asymp}}\right),
\]
and choose $\widehat \gamma$ as an estimator for $\gamma$. Notice here that we only use a small part of the available data, namely the average size and not the whole size distribution. For each of the four protein fibrils analysed ($\alpha-$Synuclein, associated with Parkinson, bovine $\beta-$lactoglobulin, chicken egg lysozyme and $\beta_2$-microglobulin - this last amyloids being involved in systemic dialysis-related amyloidosis), we had from $7$ to $14$ time points $t_i.$ The fits appeared very satisfactory (see~\cite{BTMPSTDX19}, Fig.~5).
\item  From $\hat\gamma$ and $t_{asymp}$ computed in Step~\ref{step1}, use $\widehat{M_{\hat\gamma} }(t_i)$ defined by~\eqref{def:mugammai} for $t_i\geq t_{asymp}$ and define an estimator of $\alpha$ from~\eqref{eq:mugamma} by
\[ \hat \alpha \coloneqq \f{1}{\hat \gamma t_i} \f{1}{\widehat M_{\hat\gamma} (t_i)}. \]
\end{enumerate}
To validate a posteriori the estimation obtained, we then use the full dataset as follows.
\begin{enumerate}
\item At initial time $t_1=0,$ use the size sample $(x_1(0),\cdots,x_{n_0}(0))$ to obtain an estimate $\widehat f(0,x)$ of the initial size distribution $f(0,x)=\f{u(0,x)}{\int u(0,x)\diff x}$ by a kernel density estimation method~\cite{tsybakov2009nonparametric},
\item Simulate~\eqref{eq:frag_intro} with $u(0,x)=\widehat f(0,x)$, parameters $\hat \alpha,\,\hat \gamma$ and a given kernel $\kappa$,
\item at times $t_i$ of experimental observations, compare the simulated $f(t_i,x)=\f{u(t_i,x)}{\int u(t_i,x)\diff x}$ with the density $\widehat f(t_i,x)$ estimated from the size sample $(x_1(t_i),\cdots,x_{n_i}(t_i))$ by a kernel density estimation method.
\end{enumerate}
We obtained the following quantitative results.
\begin{itemize}
\item We observed a remarkable agreement between the data observed and simulated (see Fig.7~ of~\cite{BTMPSTDX19}). This is all the more satisfactory that only two parameters $\alpha$ and $\gamma$ are necessary to fit the whole time dynamics.
\item The choice of $\kappa$ appears to have only little influence on the goodness of fit, even if we were able to draw some conclusion on whether the breakage occurs more at the edges or at the centre of the fibrils (Fig.8 of~\cite{BTMPSTDX19}).
\end{itemize}

\subsection{Using the short-time dynamics}
Intrigued by the influence of $\kappa$ on the time dynamics, we carried out a thorough numerical investigation in~\cite{tournus2021insights}. Specifically, we developed a statistical test to quantify how  different fragmentation kernels influence the size distribution of particles over time (Fig.4 of~\cite{tournus2021insights}), when all dynamics depart from the same initial distribution. We noticed first that the sharper the initial condition, the greater the influence of $\kappa$; and second, that this influence occurs mainly during an early time-window - even if not too early either, since the initial condition is taken identical in all simulations. Following this study, we turned to a short-time asymptotic development, carried out from a Dirac delta function as an initial distribution. A formal computation shows:
\begin{equation}\label{eq:approx1}
{u }(t+\Delta t,x)\approx u(t,x) -\alpha  \Delta t x^{\gamma}u(t,x)+ \alpha \Delta t \dst\int_{x}^{\infty}\kappa\left(\dfrac{x}{y} \right)y^{\gamma-1} u(t,y)\diff y+ o(\Delta t).
\end{equation}
Departing at time $t=0$ from  a Dirac delta function $\delta(x-1)$ at $x= 1$, we have
\begin{equation*}
{u }(\Delta t,x)\approx \delta(x-1)(1 -\alpha  \Delta t) + \alpha \kappa(x) \Delta t  +o(\Delta t), 
\end{equation*}
and thus we obtain an estimation formula for the kernel $\kappa$: if we have ${\widehat u(\Delta t,\cdot) }$ an estimate, obtained from observations on the size distribution ${u(\Delta t,\cdot) },$ we could define an estimate for $\kappa$ as
\begin{equation}\label{eq:estim1}
 \widehat\kappa(x) \coloneqq \dfrac{1}{\alpha \Delta t}\left({u }(\Delta t,x) - (1-\alpha \Delta t) \delta(x-1)\right) +o(1), \qquad  \Delta t \ll 1.
\end{equation}

How to evaluate the error made between $\kappa$ and $\widehat\kappa$? We first recall the definitions of the Total Variation norm and the Bounded Lipshitz norm, the latter being more appropriate  to evaluate the discrepancy between two measures. 

\begin{definition}\label{def:norms}
\label{def:measure_valued_sol}
Let $\M(\R^+)$ be the set of Radon measures whose support belongs to $\R^+$. 
The Total Variation (TV)  norm of
the signed measure  $\mu \in \M(\R^+)$ is
defined as
\begin{equation}
\label{TVdef}
    \|\mu\|_{TV} \coloneqq \sup \{\dst\int_{R^+}  \varphi(x)d\mu(x), \quad \varphi \in \mathcal{C}(\R^+)\cap L^1(d|\mu|), \; \|\varphi\|_{\infty} \leq 1\},
\end{equation}
whereas its Bounded Lipshitz (BL) norm is defined by 
\begin{equation}\label{def:BL}
    \|\mu\|_{BL} \coloneqq \sup \{\dst\int_{R^+}  \varphi(x)d\mu(x), \quad \varphi \in \mathcal{C}(\R^+)\cap L^1(d|\mu|), \; \|\varphi\|_{\infty} \leq 1, \; 
    \|\varphi '\|_{\infty}\leq 1\}.
\end{equation}
\end{definition}

A second preliminary step is to define measure-valued solutions to~\eqref{eq:approx1}, since we want to depart from a measure-valued initial condition. This is given by the following definition.

\begin{definition}[Weak solution for \eqref{eq:frag_intro} - Definition~2.1. of~\cite{doumic2021inverse}]
 A family $(\mu_t)_{t\geq 0} \subset \M(\R^+)$, where $\M(\R^+)$ denotes the space of Radon measures on $\R_+,$ is called a measure-valued solution to~\eqref{eq:frag_intro}, with initial data
 $\mu_0 \in \M(\R^+)$ satisfying $\supp (\mu_0)\subset [0,L]$, if the mapping $t\to \mu_t$ is narrowly continuous and for all $\varphi \in \mathcal{C}(\R^+)$ { such that $x\mapsto \varphi(x)/(1+x)$ is bounded on $[0,\infty)$,} and all $t \geq 0$,
{\begin{equation}
\label{ppcm1}
 \int_{\R^+} \varphi(x) d\mu_t(x)=  \int_{\R^+} \varphi(x) d\mu_0(x) + \dst\int_0^t ds \dst\int_{\R^+}
   d\mu_s(x)  \alpha x^{\gamma}\left( - \varphi(x)
 + \dst\int_{0}^{1} d\kappa(z)\varphi(xz)\right).
\end{equation}
}
\end{definition}

The class of functions used to define the weak solutions ensures the finiteness of $\int (1+x)\diff \mu_t (x).$ 
Despite the numerous studies carried out on fragmentation equations, the question of existence and uniqueness of measure-valued solutions to~\eqref{eq:frag_intro} for a large class of kernels $\kappa$  appeared yet unsolved. We thus formulated the following result, which moreover provides an explicit decomposition of such solutions.
\begin{theorem}[From Theorems~2.2, 2.4 and~2.6 of~\cite{doumic2021inverse}]
Let $\kappa$ satisfy~\eqref{as:kappa}, $\alpha>0$, $\gamma\geq 0$ and $\mu_0 \in  \mathcal{M^+}((0,L))$ for some $L>0$. There exists a unique solution $\mu\in \mathcal{C}(\R^+,\mathcal{M^+}(\R_+))$ to~ \eqref{eq:frag} in the sense of Definition  \ref{def:measure_valued_sol}. Moreover, this unique solution 
\begin{itemize}
\item preserves the mass:  $\dst\int x \mu_t(\diff x)=\int x \mu_0(\diff x)$ for any $t\geq 0$, 
\item is nonnegative,
\item satisfies $\supp(u(t,\cdot))\subset (0,L)$ for any $t\geq 0,$ 
\item  is explicitely defined as the following series, which is absolutely convergent in the TV norm for any $t\geq 0$:
\begin{equation}
\label{representation_solution}
  \mu_t = e^{-\alpha x^{\gamma}t} \mu_0
  +  \sum_{n=0}^{\infty} (\alpha t)^n
  \dst\int_0^{\infty} \ell^{n\gamma}a_n\left(\dfrac{x}{\ell}\right)\mu_0(\ell) \dfrac{d\ell}\ell{},
 \end{equation}
 where the sequence $a_n$ is defined by induction, for $x\in [0,1]$, by
 \begin{equation}
 \label{induction}
  a_0(x)=0, \quad a_{n+1}(x) = \dfrac{1}{n+1}\left(-x^{\gamma}a_n(x)+ 2\dst\int_x^{\infty} y^{\gamma-1} \kappa \left(\dfrac{x}{y} \right)a_n(y)\diff y
  + \kappa(x) \dfrac{(-1)^n}{n!}\right).
 \end{equation}
\end{itemize}
\end{theorem}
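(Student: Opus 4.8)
Written in the weak form \eqref{ppcm1}, the equation reads $\partial_t\mu_t=(-A+B)\mu_t$, where $A$ is multiplication by $\alpha x^\gamma$ and the gain operator $B$ is given in duality by $\langle B\nu,\varphi\rangle=\int_{\R^+}\alpha x^\gamma\int_0^1\varphi(xz)\,d\kappa(z)\,d\nu(x)$. The whole plan rests on the observation that, $\kappa$ being carried by $(0,1)$, breakage only moves mass towards $x=0$, so a solution starting from $\supp\mu_0\subset[0,L]$ should stay supported in $[0,L]$; there the loss rate is bounded, $\alpha x^\gamma\le\alpha L^\gamma$, and $B$ becomes a bounded operator on the Banach space $\M([0,L])$ with the $TV$-norm, $\|B\nu\|_{TV}\le 2\alpha L^\gamma\|\nu\|_{TV}$ (using $\int d\kappa=2$), while $A$ generates the contraction semigroup $S(t)\nu=e^{-\alpha x^\gamma t}\nu$. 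Thus \eqref{ppcm1} is equivalent, on measures supported in $[0,L]$, to the Duhamel identity $\mu_t=S(t)\mu_0+\int_0^t S(t-s)B\mu_s\,ds$, i.e. essentially a linear ODE in a Banach space with a bounded perturbation.

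\textbf{Existence, the explicit series, and the qualitative properties.} I would run the Picard iteration $\mu^{(0)}_t=S(t)\mu_0$, $\mu^{(k+1)}_t=S(t)\mu_0+\int_0^t S(t-s)B\mu^{(k)}_s\,ds$ in $\mathcal{C}([0,T],\M([0,L]))$: since $S(t)$ and $B$ preserve the cone of nonnegative measures and do not enlarge supports, each iterate is $\ge 0$ with support in $[0,L]$, and $\|\mu^{(k+1)}_t-\mu^{(k)}_t\|_{TV}\le\frac{(2\alpha L^\gamma t)^{k}}{k!}\|\mu_0\|_{TV}$ gives absolute convergence to a solution $\mu\in\mathcal{C}(\R^+,\M^+(\R^+))$ with $\supp\mu_t\subset[0,L]$ (no atom at $0$, since $\kappa$ has none); mass conservation comes from testing \eqref{ppcm1} with $\varphi(x)=x$, the bracket $-x+\int_0^1 xz\,d\kappa(z)$ vanishing because $\int z\,d\kappa=1$, and narrow continuity is inherited from $TV$-continuity. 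For the explicit formula \eqref{representation_solution}, I would iterate Duhamel fully into a Dyson series of nested time integrals of $S$'s and $B$'s and resum the diagonal loss contributions into the prefactor $e^{-\alpha x^\gamma t}$; to pin down the coefficients the cleanest route is to insert the ansatz $\mu_t=e^{-\alpha x^\gamma t}\mu_0+\sum_n(\alpha t)^n\int_0^\infty\ell^{n\gamma}a_n(x/\ell)\,\frac{\mu_0(d\ell)}{\ell}$ into \eqref{ppcm1}, differentiate term by term, and match powers of $t$: using $\|a_n(\cdot/\ell)\|_{TV}=\ell\|a_n\|_{TV}$ this forces exactly \eqref{induction}, the source $\kappa(x)\tfrac{(-1)^n}{n!}$ being the order-$t^n$ part of $B\big(e^{-\alpha x^\gamma t}\mu_0\big)$. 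Absolute $TV$-convergence then follows from an induction bound $\|a_n\|_{TV}\le C^n/(n-1)!$, the factor $\tfrac1{n+1}$ and the factorial source in \eqref{induction} beating the geometric growth from $\|x^\gamma a_n\|_{TV}\le\|a_n\|_{TV}$ on $[0,1]$, $\big\|\int_x^\infty y^{\gamma-1}\kappa(x/y)a_n(y)\,dy\big\|_{TV}\le 2\|a_n\|_{TV}$ and $\|\kappa\|_{TV}=2$; together with $\int_0^\infty\ell^{n\gamma}\,d\mu_0(\ell)\le L^{n\gamma}\|\mu_0\|_{TV}$ this makes the series converge in $TV$ for every $t$.

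\textbf{Uniqueness and the main obstacle.} Given two solutions in the sense of Definition \ref{def:measure_valued_sol}, I would first show each is supported in $[0,L]$ — this is the one genuinely delicate point, since $\alpha x^\gamma$ is unbounded on $\R^+$ so one cannot argue on the full line directly. The idea is to test \eqref{ppcm1} with the admissible truncations $\varphi_{K,p}(x)=x\,(x\wedge K)^{p-1}$ (for which $\varphi/(1+x)$ is bounded): for $x\le K$ the bracket equals $x^p\big(\int_0^1 z^p\,d\kappa-1\big)\le 0$ once $p$ is large, because $\kappa$ has no atom at $1$ so $\int z^p\,d\kappa\to 0$, while for $x>K$ one checks $\varphi_{K,p}(xz)\le xz\,K^{p-1}$ and $\int z\,d\kappa=1$ make the bracket $\le 0$ as well; hence $t\mapsto\int\varphi_{K,p}\,d\mu_t$ is nonincreasing, which yields $\mu_t\big((R,K]\big)\le(L/R)^p\|\mu_0\|_{TV}$, and letting $K\to\infty$ then $p\to\infty$ kills all mass beyond $L$ (one first bootstraps finiteness of the moments from $\int x\,d\mu_t=\int x\,d\mu_0$). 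Once both solutions live in $\M([0,L])$, their difference $\nu_t$ satisfies $\nu_t=\int_0^t S(t-s)B\nu_s\,ds$, hence $\|\nu_t\|_{TV}\le 2\alpha L^\gamma\int_0^t\|\nu_s\|_{TV}\,ds$, and Grönwall gives $\nu\equiv 0$. In short: everything reduces to Picard iteration, term-by-term differentiation of a factorially convergent series, and Grönwall, the only real obstacle being the a priori localization of the support against the unbounded loss rate.
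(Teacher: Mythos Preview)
Your plan is sound and yields the theorem, but it diverges from the paper on the two constructive steps. For the representation formula~\eqref{representation_solution}, the paper first builds the fundamental solution~\eqref{eq:fundamental} for $\mu_0=\delta_1$, then recovers the general case by scaling $x\mapsto\ell x$ and the superposition principle; you instead insert the full ansatz into~\eqref{ppcm1} and match powers of $t$, which is equally valid and slightly more direct. The more substantial difference is nonnegativity. The paper flags this step as ``more delicate than it can seem at first sight'' and proceeds by regularising both $\kappa$ and $\delta_1$, invoking Melzak's classical result for smooth fragmentation data, and passing to the weak limit. Your route --- Picard iteration in the Duhamel form $\mu_t=S(t)\mu_0+\int_0^t S(t-s)B\mu_s\,ds$, with the contraction $S(t)=e^{-\alpha x^\gamma t}$ and the gain operator $B$ both positivity-preserving on $\M([0,L])$ --- is more elementary and avoids any regularisation or external citation: once the nonnegative Picard limit is in hand, the Gr\"onwall argument on $[0,L]$ (which works for \emph{signed} compactly-supported measures, not only nonnegative ones) identifies it with the series~\eqref{representation_solution}, and nonnegativity of the latter drops out. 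The paper's approach has the merit of proving~\eqref{representation_solution} nonnegative without building an auxiliary solution; yours is self-contained and shorter. On uniqueness and support confinement the two arguments essentially coincide --- weight functions to trap the support in $[0,L]$, then Gr\"onwall --- and you correctly identify the a~priori localisation against the unbounded rate $\alpha x^\gamma$ as the only genuine obstacle.
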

\begin{proof}
Let us sketch the main steps of the proof. 

\

{\bf Step 1:} uniqueness and, if the solution is proved to be nonnegative,  $\supp(u(t,\cdot))\subset (0,L)$ (Theorem~2.2 in~\cite{doumic2021inverse}). By choosing appropriate weight functions, we  prove that a nonnegative solution satisfies $\supp(u(t,\cdot))\subset (0,L)$ for any $t\geq 0,$ and then, with a Gronwall lemma, we prove the estimate
\[\|\mu_t\|_{TV} \leq  \|\mu_0\|_{TV} e^{\alpha (2{L})^{\gamma} 3 t}, \]
which implies uniqueness.

\

{\bf Step 2:} the representation formula~\eqref{representation_solution} (Theorem~2.4 in~\cite{doumic2021inverse}). We first obtain the representation formula for  the fundamental solution $\mu_t^F,$ 
 defined as a solution departing from $\mu_0(x)=\delta(x-1),$ namely:
 \begin{equation}\label{eq:fundamental}\mu_t^F(x)=e^{-\alpha t}\delta(x-1)+\sum\limits_{n=1}^\infty (\alpha t)^n a_n(x)
.\end{equation}
By a scaling property, we deduce the solution for $\mu_0(x)=\delta(x-\ell),$ and we conclude by the superposition principle. 

\

{\bf Step 3:} nonnegativity of the representation formula~\eqref{representation_solution} (Theorem~2.6 in~\cite{doumic2021inverse}). This last step is more delicate than it can seem at first sight, and we have not found nonnegativity results for cases where both the solution and the fragmentation kernel are measure-valued functions  in the literature. We prove this result by an approximation strategy on the fundamental solution $\mu_t^F$. Regularising both $\kappa$ and the initial condition $\delta(x-1),$  we can apply a nonnegativity result~\cite{Melzak}; we then conclude by weak convergence of the approximate sequence.
\end{proof}

Equipped with this well-defined notion of measure-valued solutions, we then turn to our inverse problem, and prove successive error estimates:
\begin{enumerate}
\item \label{IP1} error estimate, in TV norm, for perfectly observed data (no noise), with perfectly monodisperse initial condition ($\mu_0=\delta(x-1)$),
\item \label{IP2} error estimate, in BL norm, for noisy data and nearly monodisperse initial condition ($\Vert \mu_0-\delta(x-1)\Vert_{BL}$ small),
\item \label{IP3} error estimate, via the Mellin transform, for perfectly observed data (no noise) and disperse initial condition (arbitrary $\mu_0$).
\end{enumerate}

\

\subsubsection*{Error estimate~\eqref{IP1}: monodisperse initial condition, no noise}
From~\eqref{eq:fundamental}, we are led to precise the formal approximation~\eqref{eq:estim1}, and define
\begin{equation}\label{eq:estim2}
\kappa^{est}(t)\coloneqq \f{\mu_t^F -e^{-\alpha t}\delta(x-1)}{\alpha t}.
\end{equation}
From~\eqref{representation_solution} we immediately obtain (Theorem~3.1 in~\cite{doumic2021inverse}):
\begin{equation}\label{error:estim1}
 \Big\|\kappa^{est}-\kappa\Big\|_{TV} \leq t\left(\alpha \sum\limits_{n=0}^{\infty} (\alpha T)^n \|a_{n+2}\|_{TV}\right),\,\,\forall t\in (0, T].
\end{equation}

\

\subsubsection*{Error estimate~\eqref{IP2}: nearly monodisperse initial condition, noisy data}
From~\eqref{error:estim1} it seems that the smaller the observation time $t,$ the more precise the estimate $\kappa^{est}.$ This fails in practice, where the noise comes from at least two sources: 
\begin{itemize}
\item the initial condition cannot be as precise as a Dirac delta function. Even if we could imagine an experimental setup selecting polymers of the same size, there will always remain some heterogeneity. 
\item Typical experimental measurements being polymer size samples,  $\mu_t$ as well as $\mu_0$ can only be observed up to a noise.
\end{itemize}
For these two sources of noise, the distance in BL-norm is well-adapted. We thus formulate the following error estimate result.

\

\begin{theorem}[Theorem~3.5 in~\cite{doumic2021inverse}]
\label{thm:stab}
Let $\kappa$ satisfy \eqref{as:kappa} and $\supp(\mu_0^{\ep_0})\subset [0,L]$ such  that
\begin{equation*}
 \|\mu_0^{\ep_0} - \delta(x-1)\|_{BL} \leq \ep_0.
\end{equation*}
Let $\mu_t^{\ep_0}$ the unique solution to the fragmentation equation 
\eqref{eq:frag} with initial condition $\mu_0^{\ep_0}$.
Let $\mu_0^{\ep_0,\ep_1}$ and 
 $\mu_t^{\ep_0,\ep_2}$ be noisy observations of the respective measures $\mu_0^{\ep_0}$ and $\mu_t^{\ep_0}$
 such that 
 \begin{equation*}
 \|\mu_0^{\ep_0,\ep_1}- \mu_0^{\ep_0}\|_{BL} \leq \ep_1,\qquad 
   \|\mu_t^{\ep_0,\ep_2}- \mu_t^{\ep_0}\|_{BL} \leq \ep_2.
 \end{equation*}
 {Assume moreover either $\gamma\geq 1$ or $\supp (\mu_0)\subset [m,{L}]$ with $m>0$}.
Then, for all $0\leq t\leq T$, there exist constants ${C_1(T,\alpha)}$ and ${C_2(L,T,\alpha,{\gamma,m^{\gamma-1}})}$ such that 
\begin{equation}
\label{thm:stabE4}
 \Big\|\dfrac{\mu_t^{\ep_0,\ep_2} - e^{-\alpha t} \mu_0^{\ep_0,\ep_1} }{\alpha t}-\kappa\Big\|_{BL} 
 \leq {C}_1  t + \dfrac{{C}_2 \ep_0+\ep_1+\ep_2}{\alpha t}. 
\end{equation}
\end{theorem}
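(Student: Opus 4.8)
The strategy is to split the error
$\big\|\tfrac{\mu_t^{\ep_0,\ep_2}-e^{-\alpha t}\mu_0^{\ep_0,\ep_1}}{\alpha t}-\kappa\big\|_{BL}$
into a \emph{consistency} part (the modelling error coming from the short-time expansion, present even with perfect data) and a \emph{stability} part (the propagation of the three noise levels $\ep_0,\ep_1,\ep_2$). For the consistency part I would use the explicit series representation~\eqref{representation_solution}: applied to $\mu_0^{\ep_0}$ it gives
$\mu_t^{\ep_0}=e^{-\alpha x^\gamma t}\mu_0^{\ep_0}+\alpha t\,(\kappa\ast_{(\gamma)}\mu_0^{\ep_0})+\sum_{n\geq 1}(\alpha t)^{n+1}(\cdots)$, where the $n\geq 1$ tail is $O(t^2)$ in BL-norm by the convergence estimate from the representation theorem (together with $\|a_n\|_{TV}$ bounds and the fact that BL $\leq$ TV). Subtracting $e^{-\alpha t}\mu_0^{\ep_0}$ and dividing by $\alpha t$, the leading term should reduce to $\kappa$ up to controllable remainders: one remainder is the higher-order tail ($\leq C_1 t$), and the others come from replacing $e^{-\alpha x^\gamma t}$ by $e^{-\alpha t}$ and from $\mu_0^{\ep_0}$ being only close to $\delta(x-1)$ rather than equal to it. This last substitution is exactly where the dichotomy ``$\gamma\geq 1$ or $\supp(\mu_0)\subset[m,L]$'' enters: one needs $|e^{-\alpha x^\gamma t}-e^{-\alpha t}|$ and $|x^\gamma-1|$-type quantities to be Lipschitz-controlled near the support, which requires either $\gamma\geq 1$ (so $x\mapsto x^\gamma$ is Lipschitz on $[0,L]$) or a lower bound $m>0$ on the support (so $x\mapsto x^\gamma$ is Lipschitz on $[m,L]$ for $0<\gamma<1$ as well).

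For the stability part I would proceed in three triangle-inequality steps. First, $\|\mu_t^{\ep_0,\ep_2}-\mu_t^{\ep_0}\|_{BL}\leq\ep_2$ by hypothesis, contributing $\ep_2/(\alpha t)$. Second, $\|e^{-\alpha t}(\mu_0^{\ep_0,\ep_1}-\mu_0^{\ep_0})\|_{BL}\leq\ep_1$, contributing $\ep_1/(\alpha t)$ (the factor $e^{-\alpha t}\leq 1$ is harmless). Third, the $\ep_0$-dependence: after the consistency analysis one is left comparing $(\kappa\ast_{(\gamma)}\mu_0^{\ep_0})$-type expressions with $\kappa$ itself, i.e. one must show $\|\kappa\ast_{(\gamma)}\mu_0^{\ep_0}-\kappa\|_{BL}\lesssim\ep_0$. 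This requires a BL-continuity estimate for the fragmentation ``loss/gain'' operator acting on a fixed measure $\kappa$, tested against Lipschitz $\varphi$; the key computation is that $\varphi\mapsto\big(x\mapsto\int_0^1\varphi(xz)\,d\kappa(z)\big)$ maps bounded Lipschitz functions to bounded Lipschitz functions with a controlled constant (using $\int d\kappa=2$, $\int z\,d\kappa=1$), so that testing the difference $\mu_0^{\ep_0}-\delta(x-1)$ against such transformed test functions yields the factor $\ep_0$, with the constant $C_2$ absorbing the dependence on $L$, $\alpha$, $T$, $\gamma$ and (in the small-$\gamma$ case) $m^{\gamma-1}$.

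The main obstacle I expect is the \textbf{joint control of the $O(t^2)$ tail of the series in BL-norm uniformly in the perturbed initial data}, i.e. getting a clean bound $\big\|\sum_{n\geq 1}(\alpha t)^{n+1}\int\ell^{n\gamma}a_{n+1}(x/\ell)\,d\mu_0^{\ep_0}(\ell)\big\|_{BL}\leq C_1 t$ with $C_1$ depending only on $T$ and $\alpha$ but \emph{not} on $\ep_0$ — this is where the support bound $[0,L]$ and the TV-convergence of the series (from the representation theorem, with $\sum(\alpha T)^n\|a_{n+1}\|_{TV}<\infty$) must be combined, and where one has to be careful that the BL-norm of a multiplicative-convolution-type term is genuinely bounded by its TV-norm times a harmless factor. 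A secondary technical point is making the ``divide by $\alpha t$'' step rigorous: since $\mu_t^{\ep_0}-e^{-\alpha x^\gamma t}\mu_0^{\ep_0}=\alpha t(\cdots)+o(t)$ with the $o(t)$ itself $O(t^2)$, dividing is legitimate and produces the advertised $C_1 t + (C_2\ep_0+\ep_1+\ep_2)/(\alpha t)$ shape; the two regimes of the bound (modelling error dominating for large $t$, noise dominating for small $t$) then reflect the unavoidable bias–variance trade-off already flagged in the text.
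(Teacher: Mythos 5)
Your plan is correct, but it is organised differently from the paper's own argument. The paper treats the theorem as an almost immediate consequence of two previously established results: the monodisperse consistency estimate \eqref{error:estim1} for the fundamental solution $\mu^F_t$ issued from $\delta(x-1)$, and a BL-stability theorem for the fragmentation flow with respect to the initial datum (Theorem~2.11 of~\cite{doumic2021inverse}), which bounds $\|\mu_t^{\ep_0}-\mu_t^F\|_{BL}$ by a constant times $\ep_0$; the proof is then a five-term triangle inequality in which the $\ep_0$-dependence is entirely delegated to that stability theorem, and this is precisely where the dichotomy ``$\gamma\geq 1$ or $\supp(\mu_0)\subset[m,L]$'' is needed. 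You instead expand the \emph{perturbed} solution $\mu_t^{\ep_0}$ directly via the series \eqref{representation_solution}, isolate the first-order term $a_1=\kappa$ convolved (with the $\ell^\gamma$ weight) against $\mu_0^{\ep_0}$, and extract the $\ep_0$ by a duality computation: pairing $\mu_0^{\ep_0}-\delta(x-1)$ against $\ell\mapsto\ell^\gamma\int_0^1\varphi(\ell z)\,d\kappa(z)$ and against $|x^\gamma-1|$-type functions, whose bounded-Lipschitz constants on the support are controlled exactly under your stated dichotomy. This is a valid alternative: it avoids invoking the flow-stability theorem as a black box, at the price of essentially re-deriving its first-order mechanism by hand, and it requires the uniformity-in-$\ep_0$ of the $O(t)$ tail that you correctly flag (handled by the TV bounds on $a_n$, $\|\cdot\|_{BL}\leq\|\cdot\|_{TV}$, and the mass bound $\|\mu_0^{\ep_0}\|_{TV}\leq 1+\ep_0$ coming from positivity). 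What the paper's route buys is modularity (the stability theorem is reusable and the final proof is three lines); what yours buys is self-containedness, since only the representation formula and elementary Lipschitz-duality estimates are used, and it makes transparent why the constants $C_2$ carry the $L$, $\gamma$, $m^{\gamma-1}$ dependence. Both routes place the extra hypothesis at the same point, namely the Lipschitz control of $x\mapsto x^\gamma$ on the relevant support, so I see no gap in your proposal.
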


\

The proof is immediate once a stability result for the solution to the fragmentation equation in BL-norm is obtained (Theorem~2.11 in~\cite{doumic2021inverse}): this stability is obtained only for $\gamma\geq 1$ or $\supp (\mu_0)\subset [m,{L}]$ with $m>0$, hence these extra assumptions. The form of the inequality~\eqref{thm:stabE4} is very interesting, since it has the exact form of the usual bias-variance decomposition for regularisation of inverse problems as well as nonparametric density estimation: a balance between two terms, one vanishing with the regularisation parameter, the other exploding with it but multiplied by the noise level, so that the observation time $t$ identifies with a regularisation parameter, and is optimal if $t=O(\sqrt{\ep_0+\ep_1+\ep_2})$. Intuitively, this corresponds to a time large enough so that "enough" (compared to the noise level) polymers have divided once, but small enough so that not too many have divided twice (term $C_1 t,$ which shows that the distance increases with time).

\

\subsubsection*{Error estimate~\eqref{IP3}: arbitrary initial condition, no noise}
In many practical cases, it seems very difficult, if not impossible, to have monodisperse or almost monodisperse initial condition, so that $\mu_0$ is far away from a Dirac delta function. The same strategy can however be followed.

For simplicity, let $\kappa=\kappa(x)\diff x$ and $\mu_0=u_0(x)\diff x$ admit densities with respect to the Lebesgue measure, denote $\mu_t(\diff x)=u(t,x)\diff x$ and define
\begin{equation}\label{def:kappaestmu0}
F^{est}(u _0,\kappa; t, x)=\dfrac{u (t,x) - e^{-\alpha tx^{\gamma}}u _0(x)}{\alpha t}.
\end{equation}
 Thanks to the superposition principle, we have the following inequality (Corollary~3 in~\cite{doumic2021inverse}), for a given constant $C$,
 
 \
 
\begin{equation}
\label{cor:generic1}
\Big\|{F^{est}(u _0,\kappa; t)}-w_0\ast\kappa(x)\Big\|_{TV} \leq  C{L}^{2\gamma }\|{u_0}\|_{ TV}\, t,\,\,\forall t\in (0, T],
\end{equation}

\

where $\ast$ denotes the multiplicative convolution defined by~\eqref{def:conv}
 and $w_0(x)=x^\gamma u_0(x).$ To have an estimate for $\kappa,$ we thus need to invert the multiplicative convolution. 
 Recalling the formula for the Mellin transforms~\eqref{def:Mellin},   a candidate to estimate $M[ \kappa]$ is thus
  \begin{equation}\label{def:Mellinkappa}M[\kappa]^{est} (s;t)\coloneqq \f{ {M}[u(t,\cdot)] (s)-  {M}\left[x\mapsto e^{-\alpha tx^{\gamma}}u _0(x)\right](s)}{\alpha t { M} [u_0](s+\gamma)}.
  \end{equation}
  
 \
 
  The common point between~\eqref{eq:Melling} and~\eqref{def:Mellinkappa} is the presence in the denominator of  the Mellin transform, taken in $s+\gamma,$ of a size distribution observed. For large $\gamma,$ this may reveal very noisy, since this gives a lot of  weight to the largest particles. However, contrarily to the case of formula~\eqref{eq:Melling}, we managed to prove an error estimate between $M[\kappa]^{est} (s;t)$ and $M[\kappa](s)$, thanks to the Mellin transform of the series representation~\eqref{representation_solution} and under regularity assumptions on $\kappa$ and $u_0$. We refer the reader to Theorems~4.4 and 4.6 in~\cite{doumic2021inverse} for a detailed presentation of the results, which contain an estimate of the distance between $\kappa$ and $\kappa^{est}\coloneqq M^{-1}[M[\kappa]^{est}(\cdot;t)]$, in a certain weighted norm, in the order of $ C^{st}\Delta t.$ This order of magnitude is coherent with the estimate~\eqref{error:estim1}; however the constant $C^{st}$ is not explicit, and the observation noise is not yet taken into account. This question of stability with respect to noise, together with a numerical investigation, are perspectives for future work.
\bibliographystyle{plain}

\bibliography{Tout_Marie}

\end{document}